\numberwithin{equation}{section}
\numberwithin{figure}{section}
\theoremstyle{plain}
\newtheorem{thm}{\protect\theoremname}
  \theoremstyle{definition}
  \newtheorem{defn}[thm]{\protect\definitionname}
  \theoremstyle{remark}
  \theoremstyle{plain}
  \newtheorem{lem}[thm]{\protect\lemmaname}
  \theoremstyle{definition}
  \theoremstyle{plain}
  \newtheorem{cor}[thm]{\protect\corollaryname}
  \providecommand{\corollaryname}{Corollary}
  \providecommand{\conjecturename}{Conjecture}
  \providecommand{\definitionname}{Definition}
  \providecommand{\examplename}{Example}
  \providecommand{\remarkname}{Remark}
\providecommand{\lemmaname}{Lemma}
\providecommand{\theoremname}{Theorem}
\begin{document}

\title{The Prime Number Theorem and Primorial Numbers}

\author{Jonatan Gomez \\ jgomezpe@unal.edu.co \\ Universidad Nacional de Colombia}
\maketitle
\begin{abstract}
    Counting the number of prime numbers up to a certain natural number and describing the asymptotic behavior of such a counting function has been studied by famous mathematicians like Gauss, Legendre, Dirichlet, and Euler. The prime number theorem determines that such asymptotic behavior is similar to the asymptotic behavior of the number divided by its natural logarithm. In this paper, we take advantage of a multiplicative representation of a number and the properties of the logarithm function to express the prime number theorem in terms of primorial numbers, and $n$-primorial totative numbers. A primorial number is the multiplication of the first $n$ prime numbers while $n$-primorial totatives are the numbers that are coprime to the $n$-th primorial number. By doing this we can define several different functions that can be used to approximate the behavior of the prime counting function asymptotically. 
\end{abstract}

\section{Introduction}
\subsection{Prime Numbers}
A prime number is a natural number having exactly two factors, $1$ and itself \cite{narkiewicz2000}. The natural number $1$ is not considered a prime number since it has only one factor ($1$). From now on, $p_{n}$ will denote the $n$\textbf{\emph{th}} prime number, here $n\geq 1$. The first eleven prime numbers are $2,3,5,7,11,13,17,19,23,29,31$. 
\subsection{Prime Numbers Theorem}
\cite{narkiewicz2000}. Let $\pi(x)$ be the prime number-counting function, i.e., the function that determines the number of primes less than or equal to a natural number $x$. The prime number theorem proved independently by Hadamard \cite{Hadamard1896} and de la Vall\'ee Poussin \cite{delaVallee1896}, describes the asymptotic behavior of $\pi(x)$ as follows: 

\begin{equation}
    \lim_{x \to \infty}\frac{\pi(x)}{\left(\frac{x}{\log(x)}\right)}=1
\end{equation}

Using asymptotic notation, we can rewrite the prime number theorem as follows:

\begin{equation}
\pi(x)\sim\frac{x}{\log(x)}
\end{equation}

Table \ref{tab:pnt} shows the asymptotic behavior of $\pi(x)$ and $\frac{x}{\log(x)}$. Results for $\pi(x)$ are taken from \href{https://en.wikipedia.org/wiki/Prime_number_theorem}{https://en.wikipedia.org/wiki/Prime\_number\_theorem} and results for $\frac{x}{log(x)}$ are obtained with the Python program (PNTprimorials.py) freely available at professor Jonatan Gomez github repository \cite{GomezPrimesGit}.
\begin{table}[htbp]
\centering
\begin{tabular}{|r|r|r|}
\hline
 \multicolumn{1}{|c|}{ $x$ } & \multicolumn{1}{c|}{ $\pi(x)$ } & \multicolumn{1}{c|}{ $\pi(x)/\frac{x}{\log(x)}$ } \\
\hline
$10^{1}$ & 4 & 0.921 \\
$10^{2}$ & 25 & 1.151 \\
$10^{3}$ & 168 & 1.161 \\
$10^{4}$ & 1229 & 1.132 \\
$10^{5}$ & 9592 & 1.104 \\
$10^{6}$ & 78498 & 1.084 \\
$10^{7}$ & 664579 & 1.071 \\
$10^{8}$ & 5761455 & 1.061 \\
$10^{9}$ & 50847534 & 1.054 \\
$10^{10}$ & 455052511 & 1.048 \\
$10^{11}$ & 4118054813 & 1.043 \\
$10^{12}$ & 37607912018 & 1.039 \\
$10^{13}$ & 346065536839 & 1.036 \\
$10^{14}$ & 3204941750802 & 1.033 \\
$10^{15}$ & 29844570422669 & 1.031 \\
$10^{16}$ & 279238341033925 & 1.029 \\
$10^{17}$ & 2623557157654233 & 1.027 \\
$10^{18}$ & 24739954287740860 & 1.025 \\
$10^{19}$ & 234047667276344607 & 1.024 \\
$10^{20}$ & 2220819602560918840 & 1.023 \\
$10^{21}$ & 21127269486018731928 & 1.022 \\
$10^{22}$ & 201467286689315906290 & 1.021 \\
$10^{23}$ & 1925320391606803968923 & 1.020 \\
$10^{24}$ & 18435599767349200867866 & 1.019 \\
$10^{25}$ & 176846309399143769411680 & 1.018 \\
\hline
\end{tabular}
\label{tab:pnt}
\caption{Asymptotic behavior of $\pi(x)$ and $\frac{x}{\log(x)}$. Results for $\frac{x}{log(x)}$ are obtained with the Python program (PNTprimorials.py) freely available at professor Jonatan Gomez github repository \cite{GomezPrimesGit}. Results for $\pi(x)$ are taken from \href{https://en.wikipedia.org/wiki/Prime_number_theorem}{https://en.wikipedia.org/wiki/Prime\_number\_theorem} and }
\end{table}

Now,  we write down some Theorems about prime numbers that have been proven previously in the literature, and we derive some technical properties from such Theorems.

\begin{thm}
    \label{thm:bertrand}
    (\textbf{Bertrand's postulate}) For any positive natural number $n$ we have that $p_{n+1}\leq 2p_n-1$.
\end{thm}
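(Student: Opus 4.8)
The plan is to derive the inequality from the classical form of Bertrand's postulate, that for every integer $m \geq 1$ there is a prime $q$ with $m < q \leq 2m$ (proved by Chebyshev, and by an elementary argument due to Erd\H{o}s). First I would apply this with $m = p_n$: it yields a prime $q$ with $p_n < q \leq 2p_n$. Since $p_n \geq 2$, the number $2p_n$ is even and exceeds $2$, hence is composite, so $q \neq 2p_n$ and thus $q \leq 2p_n - 1$. Because $p_{n+1}$ is by definition the smallest prime larger than $p_n$ and $q$ is such a prime, $p_{n+1} \leq q \leq 2p_n - 1$, which is exactly the claim. So if we are content to cite Bertrand's postulate, only this half-line of bookkeeping remains.

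If a self-contained proof is wanted, I would run Erd\H{o}s's argument, whose engine is the central binomial coefficient $\binom{2m}{m}$. The steps are: (i) the lower bound $\binom{2m}{m} \geq \frac{4^m}{2m}$, obtained since $\binom{2m}{m}$ is the largest of the $2m+1$ terms summing to $(1+1)^{2m}$; (ii) Legendre's formula for the exponent $v_p$ of a prime $p$ in $\binom{2m}{m}$, namely $v_p = \sum_{k \geq 1}\bigl(\lfloor 2m/p^k\rfloor - 2\lfloor m/p^k\rfloor\bigr)$, with each term $0$ or $1$ and all terms vanishing once $p^k > 2m$, so that $p^{v_p} \leq 2m$ and primes $p > \sqrt{2m}$ appear only to the first power; (iii) the observation that no prime with $\frac{2m}{3} < p \leq m$ divides $\binom{2m}{m}$ when $m \geq 3$; and (iv) the Chebyshev-type primorial bound $\prod_{p \leq x} p \leq 4^{x}$, proved by induction via $\binom{2k+1}{k}$.

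Assuming toward a contradiction that there is no prime in the interval $(m, 2m]$, items (ii)--(iv) force $\binom{2m}{m}$ to be a product of prime powers, each at most $2m$, over primes $p \leq \tfrac{2m}{3}$, giving $\binom{2m}{m} \leq (2m)^{\sqrt{2m}}\,4^{2m/3}$; juxtaposing this with the lower bound from (i) yields $4^{m/3} \leq (2m)^{1+\sqrt{2m}}$, which is false once $m$ exceeds an explicit threshold, and the finitely many smaller cases are handled by exhibiting the chain $2,3,5,7,13,23,43,83,163,317,631,1259,2503,4001$ in which each prime is less than twice its predecessor. Translating back as in the first paragraph gives $p_{n+1} \leq 2p_n - 1$. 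The real work, and the only genuine obstacle, is step (iii) together with the final balancing of the two bounds on $\binom{2m}{m}$; the reduction from Bertrand's postulate to the stated primorial-flavored inequality is entirely routine.
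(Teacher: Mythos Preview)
Your proposal is correct. The paper's own ``proof'' is simply the one-line citation ``Chebyshev proved this Theorem in \cite{Tchebychev1852}'', so your first paragraph already matches and in fact exceeds it: you take the classical statement (a prime in $(m,2m]$) and add the short reduction---$2p_n$ is composite, hence the prime found is at most $2p_n-1$, hence $p_{n+1}\leq 2p_n-1$---which the paper leaves implicit. Your second and third paragraphs, sketching Erd\H{o}s's self-contained argument via $\binom{2m}{m}$, go well beyond anything the paper attempts; this buys a proof that does not rest on an external reference, at the cost of a page of analytic-combinatorial estimates that the paper is content to outsource.
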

\begin{proof}
    Chebyshev proved this Theorem in \cite{Tchebychev1852}.
\end{proof}
\begin{cor}
    \label{cor:bertrand}
    For any positive natural number $n$ we have that $\log\left(\frac{p_{n+2}}{p_n}\right)<2$
\end{cor}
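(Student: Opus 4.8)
The plan is to apply Bertrand's postulate (Theorem~\ref{thm:bertrand}) twice and then take logarithms. Since $p_{n+2}\leq 2p_{n+1}-1$ and $p_{n+1}\leq 2p_n-1$, substituting the second inequality into the first gives
\[
p_{n+2}\leq 2(2p_n-1)-1 = 4p_n-3 < 4p_n.
\]
Dividing through by the positive quantity $p_n$ yields $\frac{p_{n+2}}{p_n} < 4$.

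Next I would apply the (natural) logarithm, which is strictly increasing, to obtain $\log\!\left(\frac{p_{n+2}}{p_n}\right) < \log 4 = 2\log 2$. Finally, since $\log 2 < 1$, we get $2\log 2 < 2$, which is the claimed bound. The argument works uniformly for every positive natural number $n$ because Bertrand's postulate is stated for all such $n$, so no separate base cases are needed.

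There is no real obstacle here; the only points requiring a word of care are that the inequality $p_{n+2} < 4p_n$ is strict (which is why we may afterward use $\leq$ or $<$ freely when passing to logarithms) and that ``$\log$'' denotes the natural logarithm, so that the estimate $\log 2 \approx 0.693 < 1$ is what closes the gap between $\log 4$ and $2$. If one preferred a base-independent phrasing, one could instead just note $\log\!\left(\frac{p_{n+2}}{p_n}\right) < \log 4$ and observe $\log 4 < 2$ in whichever fixed base is in use throughout the paper.
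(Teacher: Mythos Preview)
Your proof is correct and matches the paper's own argument essentially line for line: apply Bertrand's postulate twice to obtain $p_{n+2}<4p_n$, divide by $p_n$, take logarithms, and use $\log 4 < 2$. The only cosmetic difference is that you track the $-1$ terms to get $4p_n-3$ before discarding them, whereas the paper writes $p_{n+2}<2p_{n+1}<4p_n$ directly.
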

\begin{proof}
    By applying Theorem \ref{thm:bertrand} twice we have that $p_{n+2}<2p_{n+1}<4p_n$ for all positive natural number $n$. Clearly, $\frac{p_{n+2}}{p_n}<4$ so, $\log\left(\frac{p_{n+2}}{p_n}\right)<\log(4)<2$.
\end{proof}

\begin{thm}
    \label{thm:suzuki}
    For any positive natural number $m$ there exists a positive natural number $N$ such that $p^m_{n+1}<\prod_{i=1}^{n} p_i$ for all $n\geq N$.
\end{thm}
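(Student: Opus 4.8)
The plan is to use Bertrand's postulate repeatedly to control the product of the largest few primes lying below $p_{n+1}$, which already suffices to beat $p_{n+1}^{m}$. Fix $m\ge 1$. The key point to keep in mind is that bounding every factor of $\prod_{i=1}^{n}p_i$ from below by $2$ is useless here: it only gives $\prod_{i=1}^{n}p_i\ge 2^{n}$, while Theorem~\ref{thm:bertrand} iterated from $p_1=2$ allows $p_{n+1}$ to be as large as $2^{n+1}$, so $p_{n+1}^{m}$ could a priori dominate $2^{n}$. Instead one should isolate a block of primes sitting just below $p_{n+1}$ and exploit that each of them is within a bounded multiplicative factor of $p_{n+1}$.

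Concretely, assume $n\ge 2m$, so that $p_{n-2m+1},\dots,p_n$ are genuine factors of the primorial. From $p_{k+1}\le 2p_k-1<2p_k$ (Theorem~\ref{thm:bertrand}, strict because $p_k\ge 1$) I would deduce, by an easy induction on $j$, that $p_{n+1}<2^{j}p_{n+1-j}$ for $1\le j\le 2m$, i.e. $p_{n+1-j}>2^{-j}p_{n+1}$. Multiplying these $2m$ strict inequalities and discarding the remaining (positive) factors gives
\begin{equation}
\prod_{i=1}^{n}p_i\ \ge\ \prod_{j=1}^{2m}p_{n+1-j}\ >\ \prod_{j=1}^{2m}2^{-j}p_{n+1}\ =\ \frac{p_{n+1}^{\,2m}}{2^{\,m(2m+1)}}.
\end{equation}
Hence $\prod_{i=1}^{n}p_i>p_{n+1}^{m}$ whenever $p_{n+1}^{m}\ge 2^{m(2m+1)}$, that is, whenever $p_{n+1}\ge 2^{2m+1}$.

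To finish, since $p_{n+1}\to\infty$ there is a least index $N_0$ with $p_{N_0+1}\ge 2^{2m+1}$, and then $N:=\max\{2m,N_0\}$ has the required property: for every $n\ge N$ both $n\ge 2m$ and $p_{n+1}\ge 2^{2m+1}$ hold, so $p_{n+1}^{m}<\prod_{i=1}^{n}p_i$. I do not expect a genuine obstacle in this argument; the only things that need care are keeping the inequalities strict and carrying along the side condition $n\ge 2m$ (which is automatically absorbed into $N$). As a less self-contained alternative, one could take logarithms and invoke the prime number theorem: $\sum_{i=1}^{n}\log p_i\sim p_n$, whereas $m\log p_{n+1}\le m(\log 2+\log p_n)=o(p_n)$ by Theorem~\ref{thm:bertrand}, so the left-hand side eventually dominates; but the elementary argument above stays within the results already quoted.
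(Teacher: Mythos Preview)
Your argument is correct. Bertrand's postulate gives $p_{n+1-j}>2^{-j}p_{n+1}$ for $1\le j\le 2m$, so the top block of $2m$ primes in the primorial already contributes more than $p_{n+1}^{2m}/2^{m(2m+1)}$, and once $p_{n+1}\ge 2^{2m+1}$ this exceeds $p_{n+1}^{m}$. The bookkeeping with $N=\max\{2m,N_0\}$ is clean and the strictness is preserved throughout.

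The paper itself does not give a proof: it simply records the result and cites Suzuki's 1913 note. Your route is therefore more self-contained than the paper's, relying only on Theorem~\ref{thm:bertrand} and nothing external. The trade-off is that the paper's citation gives no effective $N$, whereas your argument produces an explicit one (any $N$ with $N\ge 2m$ and $p_{N+1}\ge 2^{2m+1}$). The alternative you sketch via $\sum_{i\le n}\log p_i\sim p_n$ would also work but drags in the first Chebyshev function estimate, which the paper has not stated; the Bertrand-only version is the better fit here.
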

\begin{proof}
    Suzuki proved this Theorem in \cite{Suzuki1913}.
\end{proof}

\begin{cor}
    \label{cor:suzuki}    
    For any positive natural number $m$ there exists a positive natural number $N$ such that $\frac{\log(p_{n+1})}{\log\left(\prod_{i=1}^{n} p_i\right)}<\frac{1}{m}$ for all $n\geq N$.
\end{cor}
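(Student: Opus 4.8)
The plan is to derive this directly from Theorem \ref{thm:suzuki} by taking logarithms. Given a positive natural number $m$, Theorem \ref{thm:suzuki} supplies a positive natural number $N$ such that $p_{n+1}^m < \prod_{i=1}^{n} p_i$ for all $n \geq N$. First I would apply the logarithm — which is strictly increasing — to both sides of this inequality, using the identity $\log\!\left(p_{n+1}^m\right) = m\log(p_{n+1})$, to obtain $m\log(p_{n+1}) < \log\!\left(\prod_{i=1}^{n} p_i\right)$ for all $n \geq N$.

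Next I would observe that the denominator is strictly positive: for every $n \geq 1$ the primorial $\prod_{i=1}^{n} p_i$ is at least $p_1 = 2 > 1$, hence $\log\!\left(\prod_{i=1}^{n} p_i\right) > 0$. Dividing the displayed inequality through by the positive quantity $m\cdot\log\!\left(\prod_{i=1}^{n} p_i\right)$ preserves the direction of the inequality and yields $\frac{\log(p_{n+1})}{\log\left(\prod_{i=1}^{n} p_i\right)} < \frac{1}{m}$ for all $n \geq N$, which is exactly the claim, with the same $N$ furnished by Theorem \ref{thm:suzuki}.

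There is no real obstacle here; the only point requiring a word of care is the positivity of $\log\!\left(\prod_{i=1}^{n} p_i\right)$, which is needed to justify dividing the inequality without reversing it, and possibly enlarging $N$ to ensure $n \geq 1$ throughout (automatic, since $N$ is a positive natural number). So the proof is essentially a two-line monotonicity argument.
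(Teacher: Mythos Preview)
Your proof is correct and follows essentially the same route as the paper: apply $\log$ to the inequality from Theorem~\ref{thm:suzuki}, use $\log(p_{n+1}^m)=m\log(p_{n+1})$, and divide through by the positive quantity $m\log\!\left(\prod_{i=1}^{n}p_i\right)$. Your version is slightly more careful in explicitly justifying the positivity of the denominator before dividing, but the argument is otherwise identical.
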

\begin{proof}
    Consider inequality in Theorem \ref{cor:suzuki}, apply $\log$ function on both sides of the inequality, divide both sides by the right side, apply property $\log(x^m)=m*\log(x)$ on the left side, and divide both sides of the inequality by $m$.
\end{proof}

\begin{thm}
\label{thm:pdivp_1}
    Functions $f(x)=\prod_{p\leq x}\frac{p}{p-1}$ and $h(x)=e^\gamma * \log(x)$ have the same asymptotic behavior. Here $\gamma$ is the Euler's constant. 
\end{thm}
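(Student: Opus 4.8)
The statement is Mertens' third theorem: since $\frac{p}{p-1} = \left(1-\frac1p\right)^{-1}$, the function in question is $f(x) = \prod_{p\le x}\left(1-\frac1p\right)^{-1}$, so $f\sim h$ is exactly the assertion $\prod_{p\le x}\left(1-\frac1p\right)^{-1} \sim e^\gamma\log x$. The plan is to pass from the product to the additive sum $\sum_{p\le x}\frac1p$ by taking logarithms, apply Mertens' first and second theorems to pin the growth down to $\log\log x + (\text{constant}) + o(1)$, and then identify that constant with $e^\gamma$ via the behaviour of the Riemann zeta function near $s=1$; this last identification is the real content.

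First I would write $\log f(x) = -\sum_{p\le x}\log\left(1-\frac1p\right) = \sum_{p\le x}\frac1p + \sum_{p\le x}\sum_{k\ge2}\frac1{kp^k}$. The inner double sum is dominated by $\sum_p\sum_{k\ge2}p^{-k} = \sum_p\frac1{p(p-1)} < \infty$, so it equals a constant $C := \sum_p\sum_{k\ge2}\frac1{kp^k}$ up to an $o(1)$ term. For $\sum_{p\le x}\frac1p$ I would invoke Mertens' second theorem $\sum_{p\le x}\frac1p = \log\log x + M + o(1)$, with $M$ the Meissel--Mertens constant; this follows by partial summation from Mertens' first theorem $\sum_{p\le x}\frac{\log p}{p} = \log x + O(1)$, which is itself elementary, coming from Legendre's formula for the exponent of a prime in $\lfloor x\rfloor!$, Stirling's estimate $\log\lfloor x\rfloor! = x\log x - x + O(\log x)$, and the Chebyshev bound $\sum_{p^k\le x}\log p = O(x)$. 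Combining, $\log f(x) = \log\log x + M + C + o(1)$, hence $f(x)\sim e^{M+C}\log x$, and the theorem reduces to the identity $M+C=\gamma$.

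The identity $M+C=\gamma$ is the main obstacle, and I would prove it by computing $\sum_p p^{-s}$ as $s\to1^+$ in two different ways. From the Euler product $\zeta(s)=\prod_p(1-p^{-s})^{-1}$ one has $\log\zeta(s) = \sum_p p^{-s} + \sum_p\sum_{k\ge2}\frac1{kp^{ks}}$, where the second term tends to $C$; since $(s-1)\zeta(s)\to1$ gives $\log\zeta(s) = -\log(s-1)+o(1)$, this yields $\sum_p p^{-s} = -\log(s-1) - C + o(1)$. For the second computation I would use partial summation against $A(t)=\sum_{p\le t}\frac1p = \log\log t + M + o(1)$, writing $\sum_p p^{-s} = (s-1)\int_2^\infty A(t)\,t^{-s}\,dt$; here the $M$-part contributes $M\,2^{1-s}\to M$, the $o(1)$-part contributes $o(1)$ by a routine Abelian estimate, and the substitution $u=(s-1)\log t$ turns the main part $(s-1)\int_2^\infty(\log\log t)\,t^{-s}\,dt$ into $\int_{(s-1)\log 2}^\infty(\log u - \log(s-1))\,e^{-u}\,du = -\log(s-1) - \gamma + o(1)$, using $\int_0^\infty e^{-u}\log u\,du = -\gamma$. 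Hence $\sum_p p^{-s} = -\log(s-1) + M - \gamma + o(1)$. Comparing the two expressions for $\sum_p p^{-s}$ forces $M+C=\gamma$, and therefore $f(x)\sim e^\gamma\log x = h(x)$. (Alternatively, one may simply cite Mertens' original proof of this classical fact.)
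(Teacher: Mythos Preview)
Your proof is correct: the logarithmic expansion, the reduction to Mertens' second theorem, and the Abelian argument identifying the constant as $\gamma$ via $\zeta(s)$ near $s=1$ are all sound. The paper's own proof, however, consists of a single sentence citing Mertens' 1874 paper, so there is nothing to compare at the level of argument---your parenthetical remark at the end (``one may simply cite Mertens' original proof'') is in fact exactly what the paper does. Your detailed sketch thus goes well beyond the paper's treatment; it is self-contained where the paper is not, at the cost of being considerably longer than what the paper required for this lemma-style citation.
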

\begin{proof}
This Theorem can be proven as a Corollary of the Theorem proved by Mertens in \cite{Mertens1874}. 
\end{proof}
\begin{cor}
 \label{cor:pdivp_1}
   Functions $f^{\bullet}(x)=\prod_{p\leq x}p$ and $h^{\bullet}(x)=e^\gamma * \log(x)*\prod_{p\leq x}p-1$ have the same asymptotic behavior. Here $\gamma$ is the Euler's constant. 
\end{cor}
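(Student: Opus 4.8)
The plan is to obtain this as an immediate consequence of Theorem \ref{thm:pdivp_1} by multiplying the equivalence there through by the positive quantity $\prod_{p\le x}(p-1)$ (so $h^{\bullet}(x)$ is to be read as $e^{\gamma}\cdot\log(x)\cdot\prod_{p\le x}(p-1)$). First I would record the elementary stability property of asymptotic equivalence under multiplication by a common factor that is eventually nonzero: if $a(x)\sim b(x)$ and $c(x)\neq 0$ for all sufficiently large $x$, then $a(x)c(x)\sim b(x)c(x)$, because $\frac{a(x)c(x)}{b(x)c(x)}=\frac{a(x)}{b(x)}\to 1$. Note that for $x\ge 2$ the set $\{p:p\le x\}$ is finite and nonempty, so $c(x):=\prod_{p\le x}(p-1)$ is a finite product of positive integers, hence $c(x)>0$.

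Next I would apply this with $a(x)=f(x)=\prod_{p\le x}\frac{p}{p-1}$, $b(x)=h(x)=e^{\gamma}\log(x)$, and $c(x)=\prod_{p\le x}(p-1)$. Theorem \ref{thm:pdivp_1} gives $a(x)\sim b(x)$, and the stability property then yields $a(x)c(x)\sim b(x)c(x)$. Finally I would simplify each side: since both products range over the same finite index set $\{p:p\le x\}$, the factors $p-1$ cancel, giving
\[
f(x)\prod_{p\le x}(p-1)=\left(\prod_{p\le x}\frac{p}{p-1}\right)\left(\prod_{p\le x}(p-1)\right)=\prod_{p\le x}p=f^{\bullet}(x),
\]
while the right-hand side is by definition $b(x)c(x)=e^{\gamma}\log(x)\prod_{p\le x}(p-1)=h^{\bullet}(x)$. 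Hence $f^{\bullet}(x)\sim h^{\bullet}(x)$, which is the claim.

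I do not anticipate any genuine obstacle: the argument is purely formal once Theorem \ref{thm:pdivp_1} is in hand. The only points requiring (minor) care are that the index sets of the two products must coincide for the cancellation to be valid, and that the multiplied-in factor is bounded away from $0$ for large $x$, which is what legitimizes passing to the limit in the ratio. If one wanted to be completely explicit, one could also note that the statement is vacuous or trivial for $x<2$ (empty products), so the asymptotic claim is only asserted for $x\to\infty$, where all quantities involved are positive.
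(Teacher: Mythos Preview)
Your proof is correct and is exactly the intended derivation: the paper states this corollary without an explicit proof, and the implicit argument is precisely to multiply the asymptotic equivalence of Theorem~\ref{thm:pdivp_1} through by $\prod_{p\le x}(p-1)$, as you do. Your reading of $h^{\bullet}(x)$ as $e^{\gamma}\log(x)\prod_{p\le x}(p-1)$ is also the correct one despite the paper's ambiguous typesetting.
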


\subsection{\label{Primorial}Primorial}

Many interesting operations can be defined over prime numbers, for example, we can define the \textbf{primorial} of the $n$\emph{th} prime number as the product of the first $n\in\mathbb{N}^{+}$ prime numbers \cite{Dubner1987}, i.e., $\#(n)=p_n\#=\prod_{i=1}^{n}p_k$. This definition is similar to the definition of the factorial function, so it is possible to define the primorial as a recursive function $\#(0)=1$ and $\#(n)=p_n\#(n-1)$ for $n\geq 1$. The first five primorials are $2,6,30,210,2310$.

\subsection{Coprime or Relative-prime Numbers}

Two natural numbers $a, b \in \mathbb{N}$ are coprime or relative-prime numbers iff their greatest common divisor is $1$ ($gcd(a,b)=1$). According to this definition, \textit{i)} $1$ is relative-prime with any other positive natural number, \textit{ii)} $0$ is not relative-prime with any natural number, and \textit{iii)} any prime number is relative-prime with any other prime number. Notice that we can define prime numbers in terms of coprime numbers: A natural number $p>1$ is a prime number iff $p$ is relative-prime to $q$ for all natural numbers $1\leq q<p$.

\subsection{Natural numbers less than $n$ ($\mathbb{Z}_n$)}
Let $n\in\mathbb{N}$, the set of natural numbers less than $n$, is $\mathbb{Z}_n=\{0,1,\ldots,n-1\}$. For example, $\mathbb{Z}_2=\{0,1\}$ and $\mathbb{Z}_{\#(2)}=\mathbb{Z}_{2*3}=\mathbb{Z}_{6}=\{0,1,3,4,5\}$. Amount the properties hold by $\mathbb{Z}_n$, we are especially interested in the structure of the subset of natural numbers that are relative-prime to $\#(n)$ (sometimes called as totative numbers of $\#(n)$). For instance, consider $\mathbb{Z}_{\#(2)}=\mathbb{Z}_6$, the subset of $\mathbb{Z}_6$ defined by the natural numbers that are relative-prime to $6$ is $\{1,5\}$. It is clear that any prime number $q$ such $p_n<q<\#(n)$ will be a relative-prime number to $\#(n)$. 

\subsection{Euler's totient function ($\varphi(n)$)}
Euler's totient function \cite{Euler1763} counts the natural numbers, in $\mathbb{Z}_n$, which are relative-prime to $n$. Take for example $\mathbb{Z}_{\#(3)}=\mathbb{Z}_{30}$, the subset of natural numbers relative-prime to $30$ is $\{1,7,11,13,17,19,23,29\}$, therefore $\varphi(30)=8$. Euler's totient function is a multiplicative function, i.e., $\varphi(ab)=\varphi(a)\varphi(b)$ for two coprime numbers $a$ and $b$. Moreover, for a prime number $p$, we have that $\mathbb{Z}_p - \{0\}$ is the set of totative numbers of $p$, therefore $\varphi(p)= p-1$. Using these two properties of Euler's totient function, we can easily compute it on primorial numbers: $\varphi(\#(n))=\prod_{i=1}^{n}(p_k-1)=\prod_{i=1}^{n}\varphi(p_k)$.

\subsection{Primorial sets}
We define a $n$-primorial set by 'replacing' numbers $0$ and $1$ in $\mathbb{Z}_{\#(n)}$, with numbers $\#(n)$ and $\#(n)+1$, respectively \cite{Gomez2023Primorial}. Let $n\in\mathbb{N}$, the $n$-primorial set is defined as  $\mathbb{Z}^{\#}_n=\{2,3,\ldots,\#(n),\#(n)+1\}$. The following is the list of the first five $n$-primorial sets:

\begin{enumerate}
    \item $\mathbb{Z}^{\#}_1=\{2,3\}$
    \item $\mathbb{Z}^{\#}_2=\{2,3,4,5,6,7\}$
    \item $\mathbb{Z}^{\#}_3=\{2,3,\ldots,30,31\}$
    \item $\mathbb{Z}^{\#}_4=\{2,3,\ldots,210,211\}$
    \item $\mathbb{Z}^{\#}_5=\{2,3,\ldots,2310,2311\}$    
\end{enumerate}

\subsection{\label{sec:tot}Primorial totatives}

We can extend the notion of totatives of $\#(n)$ to set $\mathbb{Z}^{\#}_n$ by considering the subset of natural numbers in $\mathbb{Z}^{\#}_n$ being relative-prime to $\#(n)$. The following is the list of the first three $n$-totative sets: 

\begin{enumerate}
    \item The $1$-totative set is $tot(1)=\{3\}$
    \item The $2$-totative set is $tot(2)=\{5,7\}$
    \item The $3$-totative set is $tot(3)=\{7,11,13,17,19,23,29,31\}$
\end{enumerate}

Notice that any prime number $p \in \mathbb{Z}^{\#}_n$ must be either $p_i$ for some $i=1,2,\ldots,n$ or a $n$-totative number, i.e., we can express the Prime Number Theorem in terms of totative numbers. We explore this relationship in Section \ref{sec:PNTPrimTot}. 
\begin{lem}
\label{lem:tot-phi}
The number of $n$-totatives is $tot(n)=\prod_{i=1}^{n}(p_k-1)$.
\end{lem}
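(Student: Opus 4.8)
The plan is to reduce the count of $n$-totatives to the quantity $\varphi(\#(n))$, which has already been shown in the subsection on Euler's totient function to equal $\prod_{i=1}^{n}(p_i-1)$. The key observation is that, by definition, $\mathbb{Z}^{\#}_n$ is obtained from $\mathbb{Z}_{\#(n)}$ by replacing the two elements $0$ and $1$ with $\#(n)$ and $\#(n)+1$, respectively, and this replacement leaves unchanged the number of elements coprime to $\#(n)$.

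First I would check the four "swapped" elements. Since $n\geq 1$ implies $\#(n)\geq 2$, we have $\gcd(0,\#(n))=\#(n)\neq 1$ and $\gcd(\#(n),\#(n))=\#(n)\neq 1$, so neither $0$ nor $\#(n)$ is coprime to $\#(n)$. On the other hand $\gcd(1,\#(n))=1$ and $\gcd(\#(n)+1,\#(n))=\gcd(1,\#(n))=1$, so both $1$ and $\#(n)+1$ are coprime to $\#(n)$. For every remaining value $k$ with $2\leq k\leq \#(n)-1$, the element $k$ belongs to both $\mathbb{Z}_{\#(n)}$ and $\mathbb{Z}^{\#}_n$, and whether $\gcd(k,\#(n))=1$ does not depend on which of the two sets we view $k$ in.

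Putting these together, the map that fixes each $k$ with $2\leq k\leq \#(n)-1$ and sends $0\mapsto \#(n)$ and $1\mapsto \#(n)+1$ is a bijection $\mathbb{Z}_{\#(n)}\to\mathbb{Z}^{\#}_n$ which carries the set of totatives of $\#(n)$ inside $\mathbb{Z}_{\#(n)}$ onto the set of $n$-totatives. Hence the number of $n$-totatives equals $\varphi(\#(n))$, and invoking the multiplicativity of $\varphi$ together with $\varphi(p)=p-1$ for a prime $p$ gives $tot(n)=\varphi(\#(n))=\prod_{i=1}^{n}(p_i-1)$, as claimed. The argument is essentially bookkeeping; the only point needing care is the verification at the four swapped elements and the use of $n\geq 1$ to guarantee $\#(n)>1$, and I do not anticipate any genuine obstacle beyond making that verification explicit.
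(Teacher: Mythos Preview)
Your proof is correct and follows essentially the same approach as the paper: both reduce the count of $n$-totatives to $\varphi(\#(n))$ by arguing that replacing $0,1$ with $\#(n),\#(n)+1$ does not change the number of elements coprime to $\#(n)$. The paper compresses this into a single remark that $\#(n)$ and $\#(n)+1$ are coprime, whereas you spell out the bijection and the verification at all four swapped elements explicitly; your version is simply a more detailed rendering of the same idea.
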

\begin{proof}
$\#(n)$ and $(\#(n)+1)$ are coprime numbers, therefore the number of $n$-totatives is equal to the totatives of $\#(n)$, i.e, the number of $n$-totatives is $\varphi(\#(n))=\prod_{i=1}^{n}\varphi(p_k)=\prod_{i=1}^{n}(p_k-1)=tot(n)$.
\end{proof}

\section{\label{sec:rep}The $\log$ Function and Multiplicative Number Representations}

We can establish relationships between the prime number theorem and primorial numbers if we represent numbers greater or equal to one in terms of multiplicative fractions of prime or primorial numbers. Before that, we develop a technical Lemma for analyzing asymptotic behavior.

\begin{lem}
    \label{lem:squeeze}
    Let $f$, $g$, and $h$ be real number functions such that $0 < f(x) \leq g(x) \leq h(x)$ for all $x$ greater than certain number $N$. If $f$ and $h$ have the same asymptotic behavior then  $f$, $g$, and $h$ have the same asymptotic behavior.
\end{lem}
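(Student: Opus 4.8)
The plan is to reduce the statement to the ordinary squeeze theorem for limits. First I would record that the hypothesis $0 < f(x) \leq g(x) \leq h(x)$ for all $x > N$ forces $f(x)$, $g(x)$, and $h(x)$ to be strictly positive on that range, so that every quotient written below is well defined and the usual limit laws apply there.

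Next I would divide the chain $f(x) \leq g(x) \leq h(x)$ through by $f(x) > 0$ to get $1 \leq \frac{g(x)}{f(x)} \leq \frac{h(x)}{f(x)}$ for all $x > N$. The hypothesis that $f$ and $h$ have the same asymptotic behavior means exactly $\lim_{x\to\infty}\frac{h(x)}{f(x)} = 1$ (equivalently $f \sim h$). Since the constant $1$ and $\frac{h(x)}{f(x)}$ both tend to $1$, the squeeze theorem gives $\lim_{x\to\infty}\frac{g(x)}{f(x)} = 1$, i.e. $g \sim f$.

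Then I would run the symmetric argument by dividing instead by $h(x) > 0$: from $f(x) \leq g(x) \leq h(x)$ we obtain $\frac{f(x)}{h(x)} \leq \frac{g(x)}{h(x)} \leq 1$, and since $\lim_{x\to\infty}\frac{f(x)}{h(x)} = 1$ by hypothesis, the squeeze theorem again yields $\lim_{x\to\infty}\frac{g(x)}{h(x)} = 1$, i.e. $g \sim h$. Combining $g \sim f$ and $g \sim h$ with the given $f \sim h$, and using that having the same asymptotic behavior is symmetric and transitive, we conclude that $f$, $g$, and $h$ all share the same asymptotic behavior.

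I do not expect a genuine obstacle here: the argument is a direct application of the squeeze theorem. The only points deserving explicit mention are the positivity of the denominators (needed both to form the quotients and to invoke the squeeze theorem on the stated interval) and the remark that asymptotic equivalence is an equivalence relation, so that the two pairwise conclusions assemble into the claimed three-way statement.
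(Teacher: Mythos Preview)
Your proof is correct and follows essentially the same route as the paper: divide the chain $f(x)\leq g(x)\leq h(x)$ by $f(x)>0$ and apply the ordinary squeeze theorem to conclude $\lim_{x\to\infty} g(x)/f(x)=1$. Your additional division by $h(x)$ and the remark on asymptotic equivalence being an equivalence relation are harmless extras; the paper stops after the first squeeze, since $g\sim f$ together with the hypothesis $f\sim h$ already gives the three-way conclusion.
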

\begin{proof}
    Obvious, $1=\frac{f(x)}{f(x)}\leq\frac{g(x)}{f(x)}\leq\frac{h(x)}{f(x)}$, by the Squeeze or sandwich Theorem we have $1=\lim_{x \to \infty} \frac{f(x)}{f(x)} \leq \lim_{x \to \infty} \frac{g(x)}{f(x)} \leq \lim_{x \to \infty} \frac{h(x)}{f(x)}=1$.
\end{proof}

\begin{defn}
\label{def:rep}
    Let $A=\left\{a_i\right\}_{i\in \mathbb{N}^{+}}$ be a monotonic increasing succession such that $a_1 \geq 1$. For any number $x \geq a_1$, any positive natural number $n$, and $0\leq r <1$:
    \begin{enumerate}
        \item $n(x)$ as the positive natural number such that $a_{n(x)}\leq x < a_{n(x)+1}$. 
        \item $r(x)=\frac{x-a_{n(x)}}{a_{n(x)+1}-a_{n(x)}}$,
        \item $s(n,r)=1+\left(\frac{a_{n+1}}{a_{n}}-1\right)r$,
        \item $s(x)=s(n(x),r(x))$, and
        \item $y(n,r)=a_{n}*s(n,r)$.
    \end{enumerate}    
\end{defn}

Notice that $s(n,r)=1+\left(\frac{a_{n+1}} {a_{n}}-1\right)r=1+\left(\frac{a_{n+1}-a_{n}}{a_{n}}\right)r = \frac{a_{n+1}r+a_{n}(1-r)}{a_{n}}$ is kind of the $r$-th multiplicative fraction of $a_{n+1}$ respect to succession $A$.

\begin{lem}
\label{lem:rep}
    Let $A=\left\{a_i\right\}_{i\in \mathbb{N}^{+}}$ be a monotonic increasing succession such that $a_1 \geq 1$. For any number $x \geq a_1$, any positive natural number $n$, and $0\leq r <1$
    \begin{enumerate}
        \item $a_{n}\leq y(n,r) < a_{n+1}$,
        \item $x=y(n(x),r(x))=a_{n(x)}*s(x)$
        \item $m=n(y(m,t))$, and $t=r(y(m,t))$
    \end{enumerate}    
\end{lem}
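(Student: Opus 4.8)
The plan is to reduce all three claims to a single algebraic identity and then unwind Definition \ref{def:rep}. Expanding $s(n,r)=1+\left(\frac{a_{n+1}}{a_{n}}-1\right)r$ and multiplying through by $a_{n}$ (legitimate since $a_{n}\geq a_{1}\geq 1>0$) gives
\[
y(n,r)=a_{n}\,s(n,r)=a_{n}+(a_{n+1}-a_{n})\,r .
\]
Once this is in hand, the three parts are essentially bookkeeping.

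For part (1), since $A$ is increasing we have $a_{n+1}-a_{n}>0$, so from $y(n,r)=a_{n}+(a_{n+1}-a_{n})r$ the hypothesis $0\leq r$ yields $y(n,r)\geq a_{n}$, and $r<1$ yields $y(n,r)<a_{n}+(a_{n+1}-a_{n})=a_{n+1}$. For part (2), I substitute $r=r(x)=\frac{x-a_{n(x)}}{a_{n(x)+1}-a_{n(x)}}$ into the identity with $n=n(x)$; the factor $a_{n(x)+1}-a_{n(x)}$ cancels, leaving $y(n(x),r(x))=a_{n(x)}+(x-a_{n(x)})=x$. The second equality $x=a_{n(x)}\,s(x)$ is then immediate from $y(n,r)=a_{n}s(n,r)$ and the definition $s(x):=s(n(x),r(x))$; here one needs $n(x)$ to exist, which holds because $x\geq a_{1}$ and the half-open intervals $[a_{k},a_{k+1})$, $k\geq 1$, cover $[a_{1},\infty)$.

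For part (3), set $x'=y(m,t)$. By part (1), $a_{m}\leq x'<a_{m+1}$; because $A$ is strictly increasing these intervals are pairwise disjoint, so $m$ is the \emph{unique} index satisfying $a_{m}\leq x'<a_{m+1}$, whence $n(x')=m$ by definition, i.e. $m=n(y(m,t))$. Substituting $n(x')=m$ into $r(x')=\frac{x'-a_{n(x')}}{a_{n(x')+1}-a_{n(x')}}$ and using $x'-a_{m}=(a_{m+1}-a_{m})t$ (again the identity above) gives $r(x')=t$, i.e. $t=r(y(m,t))$.

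The computations are routine; the only point needing care — the closest thing to an obstacle — is the well-definedness and uniqueness of the index function $n(\cdot)$. This relies on $A$ being \emph{strictly} increasing, so that the intervals $[a_{k},a_{k+1})$ genuinely partition $[a_{1},\infty)$ and the index in part (3) is unambiguous, and on $a_{1}\geq 1$, which keeps $a_{n}$ bounded away from $0$ so that the division turning $s(n,r)$ into the displayed identity is valid. If one only assumes $A$ is non-decreasing, part (3) can fail, so I would state the strict-monotonicity reading explicitly (as is the case in all intended applications, where $a_{i}$ is a prime or a primorial).
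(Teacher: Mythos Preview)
Your argument is correct and follows essentially the same route as the paper: both reduce everything to the identity $y(n,r)=a_{n}+(a_{n+1}-a_{n})r$ (the paper phrases it as the convex combination $a_{n+1}r+a_{n}(1-r)$), then read off (1) from $0\leq r<1$, verify (2) by direct substitution, and obtain (3) from (1). Your treatment of (3) is in fact more explicit than the paper's, which simply says ``Follows from (1) and (2)''; your remark that strict monotonicity is what guarantees uniqueness of $n(\cdot)$ is a fair clarification of an assumption the paper leaves implicit.
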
        
\begin{proof}
 (1) $y(n,r)=a_{n}*s(n,r)=a_{n}\left(\frac{a_{n+1}r+a_{n}(1-r)}{a_{n}}\right)=a_{n+1}r+a_{n}(1-r)$. Clearly, $y(n,r)$ is a convex combination between $a_{n+1}$ and $a_{n}$, then $a_{n}\leq y(n,r) < a_{n+1}$. (2) $a_{n(x)}*s(x)=a_{n(x)}\left(1+\left(\frac{a_{n(x)+1}-a_{n(x)}}{a_{n(x)}}\right)\left(\frac{x-a_{n(x)}}{a_{n(x)+1}-a_{n(x)}}\right)\right)=a_{n(x)}*s(x)$, i.e., $a_{n(x)}*s(x)=a_{n(x)}\left(1+\frac{x-a_{n(x)}}{a_{n(x)}}\right)=a_{n(x)}\left(\frac{x}{a_{n(x)}}\right)=x$. (3) Follows from (1) and (2).
\end{proof}

We can define, in particular, two different representations of a number $x\geq 2$ if we consider Lemma \ref{lem:rep}:

\begin{enumerate}
    \item The \textbf{prime representation} of $x \geq 2$ uses the succession of prime numbers ($A=\left\{p_n\right\}_{n\in \mathbb{N}^{+}}$). Therefore, $x=p_{n^{\star}(x)}*s^{\star}(x)$ with $n^{\star}(x)$ such that $p_{n^{\star}(x)}\leq x<p_{n^{\star}(x)+1}$, $s^{\star}(x)=1+\left(\frac{p_{n^{\star}(x)+1}}{p_{n^{\star}(x)}}-1\right)r^{\star}(x)$, and $r^{\star}(x)=\frac{x-p_{n^{\star}(x)}}{p_{n^{\star}(x)+1}-p_{n^{\star}(x)}}$.
    \item The \textbf{primorial representation} of $x \geq 2$ uses the succession of primorial numbers ($A=\left\{\#(n)\right\}_{n\in \mathbb{N}^{+}}$). Therefore, $x=\#(n'(x))*s'(x)$ with $n'(x)$ such that $\#(n'(x))\leq x<\#(n'(x)+1)$, $s'(x)=1+\left(p_{n'(x)+1}-1\right)r'(x)$, and $r'(x)=\frac{x-\#(n'(x))}{\#(n'(x)+1)-\#(n'(x))}$.
\end{enumerate}   

We can take advantage of the prime representation of a number for computing the $\log(x)$ function, see Equation \ref{eq:log_prime}.
\begin{equation} \label{eq:log_prime}
\log(x)=\log\left(p_{n^{\star}(x)}*s^{\star}(x)\right)=\log(p_{n^{\star}(x)}) + \log(s^{\star}(x))
\end{equation}

We can manipulate Equation \ref{eq:log_prime} to produce functions having the asymptotic behavior of $\log(x)$, see Equations \ref{eq:logp-} and \ref{eq:logp+}.

\begin{equation} \label{eq:logp-}
\log^{-}(x)=\log\left(p_{n^{\star}(x)-1}\right)
\end{equation}

\begin{equation} \label{eq:logp+}
\log^{+}(x)=\log\left(p_{n^{\star}(x)+1}\right)
\end{equation}

\begin{thm}
\label{thm:logp+-}
Functions $\log^{-}(x)$ and $\log^{+}(x)$ have the same asymptotic behavior, i.e.,
$$
    \lim_{x \to \infty}\frac{\log^{-}(x)}{\log^{+}(x)}=1
$$
\end{thm}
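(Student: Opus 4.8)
The plan is to reduce the statement to a comparison of nearby primes and then close it with the squeeze lemma (Lemma \ref{lem:squeeze}) fed by Bertrand's corollary (Corollary \ref{cor:bertrand}). Writing $n=n^{\star}(x)$, recall that $p_{n^{\star}(x)}\leq x<p_{n^{\star}(x)+1}$, so $n^{\star}(x)\to\infty$ as $x\to\infty$; in particular both $\log^{-}(x)=\log(p_{n^{\star}(x)-1})$ and $\log^{+}(x)=\log(p_{n^{\star}(x)+1})$ tend to $\infty$, and for all $x$ large enough (concretely $x\geq p_{2}=3$) we have $n^{\star}(x)\geq 2$, so $\log^{-}(x)>0$ and the quotient in the statement is well defined.

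First I would note that since the primes are strictly increasing, $p_{n^{\star}(x)-1}<p_{n^{\star}(x)+1}$, hence $\log^{-}(x)\leq\log^{+}(x)$. For the matching lower bound I would apply Corollary \ref{cor:bertrand} with the index $n^{\star}(x)-1$ (legitimate once $n^{\star}(x)-1\geq 1$, i.e.\ for $x$ large): it yields $\log\bigl(p_{n^{\star}(x)+1}/p_{n^{\star}(x)-1}\bigr)<2$, that is, $\log^{+}(x)-\log^{-}(x)<2$, so $\log^{+}(x)-2<\log^{-}(x)\leq\log^{+}(x)$ for all sufficiently large $x$.

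Finally I would invoke Lemma \ref{lem:squeeze} with $f(x)=\log^{+}(x)-2$, $g(x)=\log^{-}(x)$, and $h(x)=\log^{+}(x)$. For $x$ large enough all three are positive and satisfy $0<f(x)\leq g(x)\leq h(x)$, while $f$ and $h$ have the same asymptotic behavior because $\frac{\log^{+}(x)-2}{\log^{+}(x)}=1-\frac{2}{\log^{+}(x)}\to 1$, using $\log^{+}(x)\to\infty$. Lemma \ref{lem:squeeze} then gives that $f$, $g$, $h$ all share the same asymptotic behavior, and in particular $\lim_{x\to\infty}\log^{-}(x)/\log^{+}(x)=1$, which is the claim.

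I do not anticipate a genuine obstacle here; the only points needing care are confirming that the index shift in Corollary \ref{cor:bertrand} is valid (i.e.\ $n^{\star}(x)-1\geq 1$ for large $x$) and that $\log^{+}(x)\to\infty$, both of which follow immediately from $n^{\star}(x)\to\infty$.
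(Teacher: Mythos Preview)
Your proposal is correct and follows essentially the same approach as the paper: both arguments hinge on Corollary~\ref{cor:bertrand} to obtain $\log^{+}(x)-\log^{-}(x)<2$ and then exploit $\log^{\pm}(x)\to\infty$. The only cosmetic difference is that the paper finishes with a direct $\epsilon$--$M$ estimate $\bigl|\log^{+}(x)/\log^{-}(x)-1\bigr|<2/\log^{-}(x)<\epsilon$, whereas you package the same bound through Lemma~\ref{lem:squeeze}; the mathematical content is identical.
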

\begin{proof}
Given a real number $\epsilon>0$, we can take $n$ as the smallest natural number such that $\frac{2}{\log(p_n)}<\epsilon$. Now, we consider $M=p_{n}$. If $x>M$ then $n^{\star}(x) > n$. We have $\log^{+}(x)-\log^{-}(x) = \log(p_{n^{\star}(x)+1})-\log(p_{n^{\bullet}(x)-1}) = \log\left(\frac{p_{n^{\star}(x)+1}}{p_{n^{\star}(x)-1}}\right)$. By Corollary \ref{cor:bertrand} we have $\log^{+}(x)-\log^{-}(x) < 2$. Clearly, $\left| \frac{\log^{+}(x)}{\log^{-}(x)} - 1 \right| =\frac{\log^{+}(x)-\log^{-}(x)}{\log^{-}(x)} < \frac{2}{\log^{-}(x)}<\epsilon$. 
\end{proof}
\begin{cor}
    \label{cor:logp+-}
   Functions: 
   \begin{enumerate}
       \item $\log^{-}(x)$
       \item $\log^{a}(x)=\log^{-}(x)+log(a(x))$ with $1 \leq a(x) \leq \frac{p_{n^{\star}(x)+1}}{p_{n^{\star}(x)-1}}$
       \item $\log(x)$
       \item $\log^{\star}(x)=\log(p_{n^{\star}(x)})$
       \item $\log^{*}(x)=\log^{-}(x) + \log\left(1+\left(\frac{p_{n^{\star}(x)}}{p_{n^{\star}(x)-1}}-1\right)r^{\star}(x)\right)$, and
       \item $\log^{+}(x)$
   \end{enumerate}
   Have the same asymptotic behavior.
\end{cor}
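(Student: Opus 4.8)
The plan is to deduce everything from Theorem~\ref{thm:logp+-} via the squeeze Lemma~\ref{lem:squeeze}, taking $f=\log^{-}$ and $h=\log^{+}$. By Theorem~\ref{thm:logp+-} these two functions have the same asymptotic behavior, so it suffices to show that each of the remaining functions $\log^{a}$, $\log$, $\log^{\star}$, and $\log^{*}$ is squeezed between them, i.e. $\log^{-}(x)\le g(x)\le \log^{+}(x)$ for all large $x$, together with eventual positivity of $\log^{-}$. For positivity I would simply restrict to $x\ge 3$: then $n^{\star}(x)\ge 2$, hence $p_{n^{\star}(x)-1}\ge p_{1}=2$ and $\log^{-}(x)\ge\log 2>0$, which also makes $\log^{+}$ and all the intermediate functions positive.

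For the sandwich bounds I would argue pointwise, using only monotonicity of the prime sequence and the fact that $0\le r^{\star}(x)<1$. For $\log^{\star}$: since $p_{n^{\star}(x)-1}<p_{n^{\star}(x)}<p_{n^{\star}(x)+1}$, applying $\log$ gives $\log^{-}(x)\le\log^{\star}(x)\le\log^{+}(x)$. For $\log$: by Equation~\ref{eq:log_prime} and $p_{n^{\star}(x)}\le x<p_{n^{\star}(x)+1}$ one gets $\log^{-}(x)<\log(p_{n^{\star}(x)})\le\log(x)<\log(p_{n^{\star}(x)+1})=\log^{+}(x)$. For $\log^{a}$: the hypothesis $1\le a(x)\le \frac{p_{n^{\star}(x)+1}}{p_{n^{\star}(x)-1}}$ yields $0\le\log(a(x))\le\log(p_{n^{\star}(x)+1})-\log(p_{n^{\star}(x)-1})$, and adding $\log^{-}(x)$ gives $\log^{-}(x)\le\log^{a}(x)\le\log^{+}(x)$. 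Finally $\log^{*}$ is the special case of $\log^{a}$ with $a(x)=1+\bigl(\tfrac{p_{n^{\star}(x)}}{p_{n^{\star}(x)-1}}-1\bigr)r^{\star}(x)$: since $\tfrac{p_{n^{\star}(x)}}{p_{n^{\star}(x)-1}}>1$ and $0\le r^{\star}(x)<1$, this $a(x)$ is a convex combination of $1$ and $\tfrac{p_{n^{\star}(x)}}{p_{n^{\star}(x)-1}}$, so $1\le a(x)<\tfrac{p_{n^{\star}(x)}}{p_{n^{\star}(x)-1}}\le\tfrac{p_{n^{\star}(x)+1}}{p_{n^{\star}(x)-1}}$ and the previous item applies.

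With these inequalities established, Lemma~\ref{lem:squeeze} applied four times (each time with $f=\log^{-}$, $h=\log^{+}$, and $g$ one of the intermediate functions) shows that all six functions share the asymptotic behavior of $\log^{-}$, hence of one another, since having the same asymptotic behavior is an equivalence relation on eventually positive functions. I expect no genuine obstacle: all of the content sits in Theorem~\ref{thm:logp+-} (which itself rests on Bertrand's postulate via Corollary~\ref{cor:bertrand}), and the rest is the bookkeeping above. The one point requiring a moment's care is guaranteeing that the lower function $\log^{-}$ is strictly positive, so that Lemma~\ref{lem:squeeze} is applicable — this is exactly why the mild restriction $x\ge 3$ is imposed.
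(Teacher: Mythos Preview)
Your proposal is correct and follows essentially the same approach as the paper: the paper's proof simply asserts that each of the functions in (2)--(6) satisfies $\log^{-}(x)\le\log^{?}(x)\le\log^{+}(x)$ and then invokes Theorem~\ref{thm:logp+-} together with Lemma~\ref{lem:squeeze}. You have supplied the explicit verifications of these sandwich inequalities (and the positivity of $\log^{-}$) that the paper leaves implicit, but the strategy is identical.
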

\begin{proof}
        Since every function $\log^{?}(x)$ defined in (2)-(6) satisfies $\log^{-}(x)\leq\log^{?}(x)\leq\log^{+}(x)$, then by Theorem \ref{thm:logp+-} and Lemma \ref{lem:squeeze} the proof is completed.
\end{proof}

Now, we take advantage of the primorial representation of a number for computing the $\log(x)$ function, see Equation \ref{eq:log}.

\begin{equation} \label{eq:log}
\log(x)=\log\left(\#(n'(x))*s'(x)\right)=\log(s'(x)) + \log\left(\prod_{i=1}^{n'(x)} p_i\right) = \log(s'(x)) + \sum_{i=1}^{n'(x)} \log(p_i) 
\end{equation}

We can manipulate Equation \ref{eq:log} to produce functions with the asymptotic behavior of $\log(x)$, see Equations \ref{eq:log-} and \ref{eq:log+}.

\begin{equation} \label{eq:log-}
\log_{-}(x)=\log\left(\#(n'(x)-1)\right)=\log\left(\prod_{i=1}^{n'(x)-1} p_i\right)
\end{equation}

\begin{equation} \label{eq:log+}
\log_{+}(x)=\log\left(\#(n'(x)+1)\right)=\log\left(\prod_{i=1}^{n'(x)+1} p_i\right)
\end{equation}

\begin{thm}
\label{thm:log+-}
Functions $\log_{-}(x)$ and $\log_{+}(x)$ have the same asymptotic behavior, i.e.,
$$
    \lim_{x \to \infty}\frac{\log_{-}(x)}{\log_{+}(x)}=1
$$
\end{thm}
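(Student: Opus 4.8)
The plan is to follow the shape of the proof of Theorem \ref{thm:logp+-}, but with one genuine difference: here the gap $\log_{+}(x)-\log_{-}(x)$ is \emph{not} bounded, so Corollary \ref{cor:bertrand} no longer suffices and we must instead control this gap \emph{relative} to $\log_{-}(x)$ by means of Corollary \ref{cor:suzuki}. First I would reduce the statement: since the primorial succession is strictly increasing with $\#(n)\geq 2$ for $n\geq 1$, we have $0<\log_{-}(x)\leq\log_{+}(x)$ for $x$ large, so it is enough to show that $\frac{\log_{+}(x)-\log_{-}(x)}{\log_{-}(x)}\to 0$ as $x\to\infty$. A direct computation with Equations \ref{eq:log-} and \ref{eq:log+} gives
$$
\log_{+}(x)-\log_{-}(x)=\log\left(\frac{\#(n'(x)+1)}{\#(n'(x)-1)}\right)=\log\left(p_{n'(x)}\,p_{n'(x)+1}\right)=\log(p_{n'(x)})+\log(p_{n'(x)+1}).
$$
Moreover, since $\#(n'(x))\leq x$ and the primorials increase without bound, $n'(x)\to\infty$ as $x\to\infty$ (explicitly, $x\geq\#(N+1)$ forces $n'(x)\geq N+1$).

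Next I would fix $\epsilon>0$ and choose a positive integer $m$ with $\frac{2}{m}+\frac{1}{m^{2}}<\epsilon$. Corollary \ref{cor:suzuki} then yields an $N$ such that $\frac{\log(p_{n+1})}{\log(\#(n))}<\frac{1}{m}$ for all $n\geq N$. I would apply this at $n=n'(x)-1$, obtaining $\log(p_{n'(x)})<\frac{1}{m}\log(\#(n'(x)-1))=\frac{1}{m}\log_{-}(x)$, and at $n=n'(x)$, obtaining $\log(p_{n'(x)+1})<\frac{1}{m}\log(\#(n'(x)))$. Since $\log(\#(n'(x)))=\log_{-}(x)+\log(p_{n'(x)})<\left(1+\frac{1}{m}\right)\log_{-}(x)$ by the previous bound, this gives $\log(p_{n'(x)+1})<\frac{1}{m}\left(1+\frac{1}{m}\right)\log_{-}(x)$. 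Adding the two estimates,
$$
\frac{\log_{+}(x)-\log_{-}(x)}{\log_{-}(x)}=\frac{\log(p_{n'(x)})+\log(p_{n'(x)+1})}{\log_{-}(x)}<\frac{2}{m}+\frac{1}{m^{2}}<\epsilon
$$
for every $x$ large enough that $n'(x)\geq N+1$ (e.g.\ $x\geq\#(N+1)$). As $\epsilon>0$ was arbitrary, $\frac{\log_{-}(x)}{\log_{+}(x)}\to 1$.

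The main obstacle is precisely the unboundedness of $\log_{+}(x)-\log_{-}(x)=\log(p_{n'(x)}p_{n'(x)+1})$: one cannot reuse the "bounded numerator over divergent denominator" argument of Theorem \ref{thm:logp+-}. The resolution is to note that Corollary \ref{cor:suzuki} is exactly the statement that $\log(p_{n+1})$ is asymptotically negligible compared with $\log(\#(n))$; the only technical care needed is to invoke it at the two consecutive indices $n'(x)-1$ and $n'(x)$ and to chain the inequalities (via $\log(\#(n'(x)))=\log_{-}(x)+\log(p_{n'(x)})$) so that both $\log(p_{n'(x)})$ and $\log(p_{n'(x)+1})$ are ultimately measured against the single denominator $\log_{-}(x)$. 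Everything else is routine.
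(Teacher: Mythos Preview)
Your proof is correct and follows essentially the same route as the paper: compute $\log_{+}(x)-\log_{-}(x)=\log\bigl(p_{n'(x)}\,p_{n'(x)+1}\bigr)$ and use Corollary~\ref{cor:suzuki} to show this gap is negligible compared to the denominator. The only difference is cosmetic: the paper divides by $\log_{+}(x)$ rather than $\log_{-}(x)$, bounds the numerator crudely by $2\log(p_{n'(x)+1})$ (since $p_{n'(x)}<p_{n'(x)+1}$), and then needs just one invocation of Corollary~\ref{cor:suzuki} at $n=n'(x)$ to get $\frac{2}{m}<\epsilon$, avoiding your chaining step and the extra $\frac{1}{m^{2}}$ term.
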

\begin{proof}
Given a real number $\epsilon>0$, we can take $m$ as the smallest natural number such that $\frac{2}{m}<\epsilon$. Now, by Theorem \ref{thm:suzuki} we can take $N$ as the smallest natural number such that $p_{n+1}^{m}<\#(n)$ for all $n\geq N$, and by Corollary \ref{cor:suzuki}, we have $\frac{\log(p_{n+1})}{\log(\#(n))}<\frac{1}{m}$ for all $n\geq N$ ($\star$). If $x>M=\#(N)$ then $n'(x)\geq N$ and $\frac{\log(p_{n'(x)+1})}{\log(\#(n'(x)))}<\frac{1}{m}$. Notice that $\log(\#(n'(x)+1))-\log(\#(n'(x)-1))=\log\left(\frac{\#(n'(x)+1)}{\#(n'(x)-1)}\right)=\log(p_{n'(x)}*p_{n'(x)+1})<\log(p^2_{n'(x)+1})$. Clearly, $\left| \frac{\log_{-}(x)}{\log_{+}(x)} - 1 \right| =\frac{\log_{+}(x)-\log_{-}(x)}{\log_{+}(x)}<\frac{2\log(p_{n'(x)+1})}{\log(\#(n'(x)+1))}<\frac{2\log(p_{n'(x)+1})}{\log(\#(n'(x)))}$. Finally, using result ($\star$) we have $\left| \frac{\log^{*}(x)}{\log(x)} - 1 \right| < \frac{2}{m}<\epsilon$.   
\end{proof}
\begin{cor}
    \label{cor:log+-}
   Functions: 
   \begin{enumerate}
       \item $\log_{-}(x)$
       \item $\log_{a(x)}(x)=\log_{-}(x)+log(a(x))$ with $1 \leq a(x) \leq p_{n'(x)}*p_{n'(x)+1}$
       \item $\log(x)$
       \item $\log_{\#}(x)=\log(\#(n'(x)))$
       \item $\log_{*}(x)=\log_{-}(x) + \log(1+(p_{n'(x)}-1)r'(x))$
       \item $\log_{\diamond}(x)=\log_{-}(x) +  \log(n'(x)+r'(x))$, and
       \item $\log_{+}(x)$
   \end{enumerate}
   Have the same asymptotic behavior.
\end{cor}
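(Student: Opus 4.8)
The plan is to reuse the squeeze argument that proved Corollary~\ref{cor:logp+-}: I will show that every function listed in (2)--(6) is trapped between $\log_{-}(x)$ (item (1)) and $\log_{+}(x)$ (item (7)) for all sufficiently large $x$, and then close the argument with Theorem~\ref{thm:log+-} and Lemma~\ref{lem:squeeze}. The first thing to record is the chain $\#(n'(x)-1)\leq\#(n'(x))\leq x<\#(n'(x)+1)$, which is immediate from the definition of $n'(x)$; applying $\log$ gives $\log_{-}(x)\leq\log_{\#}(x)\leq\log(x)\leq\log_{+}(x)$, so items (3) and (4) are handled at once, and for $x$ large enough $\log_{-}(x)>0$, so the positivity hypothesis of Lemma~\ref{lem:squeeze} is met.

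Next I would treat the general family in item (2). Since $a(x)\geq 1$ we have $\log(a(x))\geq 0$, hence $\log_{a(x)}(x)\geq\log_{-}(x)$. For the upper bound I would use the identity $\#(n'(x)+1)=\#(n'(x)-1)\,p_{n'(x)}\,p_{n'(x)+1}$, which yields $\log_{+}(x)-\log_{-}(x)=\log\!\left(p_{n'(x)}\,p_{n'(x)+1}\right)$; combined with the hypothesis $a(x)\leq p_{n'(x)}\,p_{n'(x)+1}$ this gives $\log(a(x))\leq\log_{+}(x)-\log_{-}(x)$, i.e. $\log_{a(x)}(x)\leq\log_{+}(x)$. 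Thus every function of the form in (2) is squeezed between (1) and (7).

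It then remains to observe that items (5) and (6) are instances of (2). For (5), put $a(x)=1+(p_{n'(x)}-1)\,r'(x)$; since $0\leq r'(x)<1$ one has $1\leq a(x)<p_{n'(x)}\leq p_{n'(x)}\,p_{n'(x)+1}$, so (5) falls under (2). For (6), put $a(x)=n'(x)+r'(x)$; then $1\leq a(x)<n'(x)+1$, and using $p_{n}\geq n+1$ together with $p_{n+1}\geq 2$ one gets $n'(x)+1\leq p_{n'(x)}\,p_{n'(x)+1}$, so again (6) falls under (2). With every item pinned between $\log_{-}(x)$ and $\log_{+}(x)$, Theorem~\ref{thm:log+-} (which says (1) and (7) share the same asymptotic behavior) and Lemma~\ref{lem:squeeze} finish the proof.

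I expect no real obstacle here: the corollary is a bookkeeping exercise once Theorem~\ref{thm:log+-} is in hand. The only points needing a moment of care are verifying the elementary inequality $n'(x)+1\leq p_{n'(x)}\,p_{n'(x)+1}$ used for item (6) and making sure the squeeze is applied only on the range $x>N$ where $\log_{-}(x)>0$; both are routine.
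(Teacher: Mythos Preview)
Your proof is correct and follows exactly the approach of the paper: show that each of the functions in (2)--(6) is squeezed between $\log_{-}(x)$ and $\log_{+}(x)$ and then invoke Theorem~\ref{thm:log+-} together with Lemma~\ref{lem:squeeze}. The paper's own proof is a one-line assertion of this squeeze without spelling out the verifications you carry out for items (2), (5), and (6), so your version simply fills in those routine checks.
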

\begin{proof}
    Since every function $\log_{?}(x)$ defined in (2)-(6) satisfies $\log_{-}(x)\leq\log_{?}(x)\leq\log_{+}(x)$, then by Theorem \ref{thm:log+-} and Lemma \ref{lem:squeeze} the proof is completed.
\end{proof}

\section{Primorials and $n$-Totative Numbers}
Notice that if we consider $x=p_n$, Theorem \ref{thm:pdivp_1} and Corollary \ref{cor:pdivp_1} suggest a relationship between the primorial number $\#(n)$ and the quantity of $n$-totatives numbers when $n \to \infty$. Upon this suggestion, we propose a relationship between any positive number $x\geq 2$ and $n$-totative numbers by considering the primorial representation of a number and defining a function that approximates the quantity of $n$-totative numbers up to $x$, see Equation \ref{eq:tot*}.

\begin{equation}
\label{eq:tot*}
    tot_{*}(x)=tot(n'(x))*t_{*}(x)=t_{*}(x)*\prod_{i=1}^{n'(x)}p_i-1
\end{equation}

Here, $t_{*}(x)=1+(p_{n'(x)+1}-2)*r'(x)$, with $n'(x)$ and $r'(x)$ as defined in Section \ref{sec:rep}, and $tot(n)$ as defined in Section \ref{sec:tot}. Before establishing the relationship, we need the following technical lemma.

\begin{lem}
    \label{lem:aa_1}
    $1 \leq \frac{1+(a-1)r}{1+(a-2)r} < \frac{a}{a-1}$ for all $a>1$ and all $0\leq r <1$.
\end{lem}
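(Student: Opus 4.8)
The plan is to reduce both inequalities to elementary polynomial comparisons after first pinning down the sign of the denominator $1+(a-2)r$. Note that when $1<a<2$ the coefficient $a-2$ is negative, so the denominator could a priori vanish; the clean way to rule this out is to rewrite
\[
1+(a-2)r = (1-r) + (a-1)r,
\]
which is a sum of a strictly positive term $1-r>0$ (since $r<1$) and a nonnegative term $(a-1)r\geq 0$ (since $a>1$, $r\geq 0$). Hence $1+(a-2)r>0$ for all admissible $a,r$, and we may freely clear denominators below.

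For the left inequality $1\leq \frac{1+(a-1)r}{1+(a-2)r}$, I would simply compare numerator and denominator directly: their difference is
\[
\bigl(1+(a-1)r\bigr)-\bigl(1+(a-2)r\bigr) = r \geq 0,
\]
so the numerator is at least the (positive) denominator, giving the ratio $\geq 1$, with equality exactly when $r=0$.

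For the right inequality $\frac{1+(a-1)r}{1+(a-2)r} < \frac{a}{a-1}$, I would cross-multiply — legitimate because $a-1>0$ and $1+(a-2)r>0$ — to reduce it to
\[
(a-1)\bigl(1+(a-1)r\bigr) < a\bigl(1+(a-2)r\bigr).
\]
Expanding both sides, the right side minus the left side equals
\[
\bigl(a-(a-1)\bigr) + \bigl(a(a-2)-(a-1)^2\bigr)r = 1 + (-1)r = 1-r,
\]
which is strictly positive since $r<1$. This establishes the strict inequality and completes the proof. There is no real obstacle here; the only point requiring a moment's care is the positivity of the denominator in the regime $1<a<2$, which the decomposition $(1-r)+(a-1)r$ settles at once.
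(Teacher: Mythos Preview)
Your proof is correct and follows essentially the same route as the paper's: compare numerator to denominator for the left inequality, and cross-multiply and expand for the right one. Your argument is in fact slightly more careful, since you explicitly verify that the denominator $1+(a-2)r$ is positive via the decomposition $(1-r)+(a-1)r$, a point the paper's proof uses implicitly (when dividing) but never checks.
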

\begin{proof}
Consider $r=0$: clearly, $\frac{1+(a-1)r}{1+(a-2)r}=\frac{1}{1}=1$. Consider $0<r<1$: clearly $a-2<a-1$ then $1+(a-2)r<1+(a-1)r$, so $1<\frac{1+(a-1)r}{1+(a-2)r}$. Now $r<1$, i.e., $-1+r<0$. By adding $a+a^2r-2ar$ on both sides of the inequality and organizing terms we have $a-1+a^2r-2ar+r<a+a^2r-2ar$. Grouping terms we have $(a-1)(1+(a-1)r)<a(1+(a-2)r)$ and dividing both sides of inequality by $(a-1)(1+(a-2)r)$ we have $ \frac{1+(a-1)r}{1+(a-2)r} < \frac{a}{a-1}$.
\end{proof}

Now, we follow a similar scheme as the one we used in the previous section to establish the relationship. 

\begin{thm}
\label{thm:pdivp_1}
    Functions $f(x)=\prod^{n'(x)}_{i=1}\frac{p_i}{p_i-1}$ and $g(x)=\prod^{n'(x)+1}_{i=1}\frac{p_i}{p_i-1}$ have the same asymptotic behavior.
\end{thm}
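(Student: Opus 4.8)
The plan is to reduce the statement to the trivial observation that a single ``extra'' Euler factor $\frac{p}{p-1}$ tends to $1$ as $p\to\infty$. Concretely, since $g(x)$ is the same product as $f(x)$ with one additional factor, I would first record the telescoping identity
\[
\frac{g(x)}{f(x)}=\frac{\prod_{i=1}^{n'(x)+1}\frac{p_i}{p_i-1}}{\prod_{i=1}^{n'(x)}\frac{p_i}{p_i-1}}=\frac{p_{n'(x)+1}}{p_{n'(x)+1}-1}=1+\frac{1}{p_{n'(x)+1}-1}.
\]
Both $f$ and $g$ are products of factors exceeding $1$, hence positive, so ``same asymptotic behavior'' amounts precisely to $\lim_{x\to\infty} g(x)/f(x)=1$, and by the identity above it suffices to prove $p_{n'(x)+1}\to\infty$ as $x\to\infty$.

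For that, I would note that $n'(x)\to\infty$: by the definition of $n'(x)$ in Section \ref{sec:rep} we have $\#(n'(x))\le x$, and since the primorials $\#(n)$ form a strictly increasing sequence with $\#(n)\to\infty$, no bound on $n'(x)$ is compatible with $x\to\infty$; hence $p_{n'(x)+1}\ge p_{n'(x)}\to\infty$. To finish in the paper's $\epsilon$-style: given $\epsilon>0$, take $N$ to be the smallest natural number with $\frac{1}{p_{N+1}-1}<\epsilon$ and put $M=\#(N)$; for $x>M$ we get $n'(x)\ge N$, so $p_{n'(x)+1}\ge p_{N+1}$ and
\[
\left|\frac{g(x)}{f(x)}-1\right|=\frac{1}{p_{n'(x)+1}-1}\le\frac{1}{p_{N+1}-1}<\epsilon,
\]
which is the claim.

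I do not expect any genuine obstacle here. In contrast with Theorems \ref{thm:logp+-} and \ref{thm:log+-}, where the multiplicative gap between the $\log^{-}/\log^{+}$ (resp.\ $\log_{-}/\log_{+}$) endpoints had to be tamed using Bertrand's postulate (Corollary \ref{cor:bertrand}) or Suzuki's theorem (Corollary \ref{cor:suzuki}), the single extra factor appearing here is already within $\frac{1}{p_{n'(x)+1}-1}$ of $1$, so no external input on the distribution of primes is needed. The only points requiring a line of justification are the telescoping simplification and the divergence $n'(x)\to\infty$. As with Corollaries \ref{cor:logp+-} and \ref{cor:log+-}, if one later wants a whole sandwiched family of intermediate functions between $f$ and $g$ to share this asymptotic behavior, one would simply combine this theorem with the squeeze Lemma \ref{lem:squeeze}; for the bare statement, the computation above is all that is needed.
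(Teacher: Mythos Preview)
Your proof is correct and follows essentially the same approach as the paper: the paper's argument is the one-line computation $\lim_{x\to\infty}\frac{g(x)}{f(x)}=\lim_{x\to\infty}\frac{p_{n'(x)+1}}{p_{n'(x)+1}-1}=1+\lim_{x\to\infty}\frac{1}{p_{n'(x)+1}-1}=1$, which is exactly your telescoping identity together with $p_{n'(x)+1}\to\infty$. Your additional $\epsilon$--$M$ formulation and the explicit justification that $n'(x)\to\infty$ are more detailed than the paper's ``obvious'', but the underlying idea is identical.
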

\begin{proof}
    Obvious, $\lim_{x\to\infty}\frac{g(x)}{f(x)}=\lim_{x\to\infty}\frac{p_{n'(x)+1}}{p_{n'(x)+1}-1}=1+\lim_{x\to\infty}\frac{1}{p_{n'(x)+1}-1}=1$
\end{proof}

\begin{cor}
\label{cor:pdivp_1}
    Function $f^{\circ}(x)=\frac{x}{tot_{*}(x)}$ has the same asymptotic behavior of functions $f(x)=\prod^{n'(x)}_{i=1}\frac{p_i}{p_i-1}$ and $g(x)=\prod^{n'(x)+1}_{i=1}\frac{p_i}{p_i-1}$ .
\end{cor}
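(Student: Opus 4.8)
The plan is to show that $f^{\circ}(x)$ differs from $f(x)$ only by a multiplicative correction factor that Lemma \ref{lem:aa_1} traps between $1$ and $\frac{p_{n'(x)+1}}{p_{n'(x)+1}-1}$, so that $f^{\circ}$ is squeezed between $f$ and $g$; the conclusion then follows from Theorem \ref{thm:pdivp_1} and Lemma \ref{lem:squeeze}.

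First I would substitute the primorial representation $x=\#(n'(x))\cdot s'(x)$ from Section \ref{sec:rep}, together with $\#(n'(x))=\prod_{i=1}^{n'(x)}p_i$, $tot(n'(x))=\prod_{i=1}^{n'(x)}(p_i-1)$, and $tot_{*}(x)=tot(n'(x))\cdot t_{*}(x)$, into the definition of $f^{\circ}$. After cancelling $\#(n'(x))$ against $tot(n'(x))$ this gives
\[
f^{\circ}(x)=\frac{x}{tot_{*}(x)}=\left(\prod_{i=1}^{n'(x)}\frac{p_i}{p_i-1}\right)\frac{s'(x)}{t_{*}(x)}=f(x)\,\frac{s'(x)}{t_{*}(x)}.
\]
Writing $a=p_{n'(x)+1}$ and $r=r'(x)$, the definitions $s'(x)=1+(a-1)r$ and $t_{*}(x)=1+(a-2)r$ identify the correction factor as $\frac{1+(a-1)r}{1+(a-2)r}$.

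Next I would invoke Lemma \ref{lem:aa_1}: since $x\geq 2$ forces $n'(x)\geq 1$ and hence $a=p_{n'(x)+1}\geq p_2=3>1$, while $0\leq r'(x)<1$ by construction, the lemma yields $1\leq\frac{s'(x)}{t_{*}(x)}<\frac{p_{n'(x)+1}}{p_{n'(x)+1}-1}$. Multiplying by $f(x)>0$ and noting that $f(x)\cdot\frac{p_{n'(x)+1}}{p_{n'(x)+1}-1}=\prod_{i=1}^{n'(x)+1}\frac{p_i}{p_i-1}=g(x)$, I obtain $0<f(x)\leq f^{\circ}(x)<g(x)$ for every $x\geq 2$. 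Finally, Theorem \ref{thm:pdivp_1} says $f$ and $g$ are asymptotically equivalent, so Lemma \ref{lem:squeeze}, applied to the triple $f\leq f^{\circ}\leq g$, gives the claim.

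I do not anticipate a genuine obstacle here: everything reduces to the algebraic identity $f^{\circ}(x)=f(x)\,s'(x)/t_{*}(x)$, and the only point needing attention is carrying out the cancellation in the first display correctly and checking that the hypotheses $a>1$ and $0\leq r<1$ of Lemma \ref{lem:aa_1} hold on the whole range $x\geq 2$. Lemma \ref{lem:aa_1} was evidently tailored to bound exactly this correction factor, which is why the argument closes immediately once the reduction is in place.
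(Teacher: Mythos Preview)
Your proposal is correct and follows essentially the same route as the paper: rewrite $f^{\circ}(x)=f(x)\cdot s'(x)/t_{*}(x)$, apply Lemma~\ref{lem:aa_1} with $a=p_{n'(x)+1}$ and $r=r'(x)$ to sandwich $f^{\circ}$ between $f$ and $g$, and conclude via Theorem~\ref{thm:pdivp_1} and Lemma~\ref{lem:squeeze}. Your version is slightly more careful in verifying the hypotheses of Lemma~\ref{lem:aa_1}, but the argument is otherwise identical.
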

\begin{proof}
Clearly, $\frac{x}{tot_{*}(x)}=\frac{s'(x)*\#(n'(x))}{t_{*}(x)*\prod_{i=1}^{n'(x)}p_i-1}=f(x)*\frac{s'(x)}{t_{*}(x)}=f(x)*\left(\frac{1+(p_{n'(x)+1}-1)r'(x)}{1+(p_{n'(x)+1}-2)r'(x)}\right)$ by Lemma \ref{lem:aa_1} we have $f(x)\leq\frac{x}{tot_{*}(x)}<f(x)*\frac{p_{n'(x)+1}}{p_{n'(x)+1}-1}=g(x)$. Then by Theorem \ref{thm:pdivp_1} and Lemma \ref{lem:squeeze} the proof is completed.
\end{proof}

Finally, we use Theorem \ref{thm:pdivp_1} to establish the desired relationship.

\begin{thm}
\label{thm:pdivp_1-2}
    Functions $f(x)=\prod^{n'(x)}_{i=1}\frac{p_i}{p_i-1}$ and $g^{\circ}(x)=e^\gamma * \log(y(x))$ with $\gamma$ the Euler's constant and $y(x)=p_{n'(x)}*\left(1+\left(\frac{p_{n'(x)+1}}{p_{n'(x)}}-1\right)r'(x)\right)$ have the same asymptotic behavior. 
\end{thm}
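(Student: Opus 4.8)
The plan is to recognize $f(x)$ as a Mertens-type product $\prod_{p\le t}\frac{p}{p-1}$ evaluated at the real argument $t=y(x)$, and then to feed this into the classical Mertens estimate composed with the divergent substitution $t=y(x)$.

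First I would verify that $y(x)$ always lies between the consecutive primes $p_{n'(x)}$ and $p_{n'(x)+1}$. Indeed $y(x)$ is precisely $y(n'(x),r'(x))$ in the sense of Definition \ref{def:rep} taken with the prime succession $A=\{p_n\}$, so Lemma \ref{lem:rep}(1) gives $p_{n'(x)}\le y(x)<p_{n'(x)+1}$ (equivalently, one checks directly that $y(x)=p_{n'(x)+1}\,r'(x)+p_{n'(x)}\,(1-r'(x))$ is a convex combination of the two primes, since $r'(x)\in[0,1)$). Consequently the primes not exceeding $y(x)$ are exactly $p_1,\dots,p_{n'(x)}$, and therefore
\[
\prod_{p\le y(x)}\frac{p}{p-1}=\prod_{i=1}^{n'(x)}\frac{p_i}{p_i-1}=f(x).
\]

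Next I would observe that $y(x)\to\infty$ as $x\to\infty$: since $\#(n'(x))\le x$ and the primorials $\#(n)\ge 2^{n}$ are unbounded, $n'(x)\to\infty$, hence $y(x)\ge p_{n'(x)}\to\infty$. Now I apply Mertens' theorem in the form $\prod_{p\le t}\frac{p}{p-1}\sim e^{\gamma}\log t$ (Theorem \ref{thm:pdivp_1}): composing the limit $\lim_{t\to\infty}\frac{1}{e^{\gamma}\log t}\prod_{p\le t}\frac{p}{p-1}=1$ with the divergent substitution $t=y(x)$ yields $\lim_{x\to\infty}\frac{f(x)}{e^{\gamma}\log(y(x))}=1$, i.e. $f(x)$ and $g^{\circ}(x)=e^{\gamma}\log(y(x))$ have the same asymptotic behavior, as claimed.

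I do not anticipate a real obstacle here; the two points that deserve a moment's care are (i) the convex-combination bound, which is exactly what makes the product over primes $\le y(x)$ collapse onto $f(x)$ with no off-by-one error, and (ii) the legitimacy of substituting $y(x)$ into the continuous-variable Mertens limit, which is guaranteed by $y(x)\to\infty$. If one prefers to mirror the sandwich-style arguments used in the preceding sections, an equivalent route is to note $e^{\gamma}\log p_{n'(x)}\le g^{\circ}(x)< e^{\gamma}\log p_{n'(x)+1}$, to observe that these two outer functions are asymptotically equal since $\log p_{n'(x)+1}-\log p_{n'(x)}=\log\!\left(p_{n'(x)+1}/p_{n'(x)}\right)<\log 2<1$ by Bertrand's postulate (Theorem \ref{thm:bertrand}), to note that $f(x)=\prod_{p\le p_{n'(x)}}\frac{p}{p-1}\sim e^{\gamma}\log p_{n'(x)}$ by Mertens, and to finish with Lemma \ref{lem:squeeze}; the direct composition above is, however, the shortest.
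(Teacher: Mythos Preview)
Your proof is correct and follows essentially the same route as the paper: identify $f(x)=\prod_{p\le y(x)}\frac{p}{p-1}$ via the bound $p_{n'(x)}\le y(x)<p_{n'(x)+1}$, note $y(x)\to\infty$, and compose Mertens' estimate with the substitution $t=y(x)$. If anything, your write-up is more careful than the paper's, which asserts the product identity and the monotonicity step ``if $x>M$ then $y(x)>y(M)$'' without justification; your convex-combination check and your alternative sandwich argument are both sound but not needed beyond what the paper does.
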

\begin{proof} 
    Given a real number $\epsilon>0$, we consider Theorem \ref{thm:pdivp_1} and take $M$ such that $\left|\frac{\prod_{p\leq z}\frac{p}{p-1}}{e^\gamma * \log(z)}-1\right|<\epsilon$ for all $z>y(M)$. If $x>M$ then $y(x) > y(M)$ and $\left|\frac{\prod_{p\leq y(x)}\frac{p}{p-1}}{e^\gamma * \log(y(x))}-1\right|<\epsilon$. We can see that $\prod^{n'(x)}_{i=1}\frac{p_i}{p_i-1}=\prod_{p\leq y(x)}\frac{p}{p-1}$ therefore, $\left|\frac{\prod^{n'(x)}_{i=1}\frac{p_i}{p_i-1}}{e^\gamma * \log(y(x))}-1\right|<\epsilon$.
\end{proof}

\begin{cor}
\label{cor:pdivp_1-2}
Functions $f^{\circ}(x)=\frac{x}{tot_{*}(x)}$ and $g^{\circ}(x)=e^\gamma * \log(y(x))$ with $\gamma$ the Euler's constant and $y(x)=p_{n'(x)}*\left(1+\left(\frac{p_{n'(x)+1}}{p_{n'(x)}}-1\right)r'(x)\right)$ have the same asymptotic behavior.
\end{cor}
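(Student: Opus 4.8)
The plan is to obtain this Corollary purely by transitivity of the relation ``having the same asymptotic behavior'', chaining the two results just established. First I would recall that Corollary \ref{cor:pdivp_1} already states that $f^{\circ}(x)=\frac{x}{tot_{*}(x)}$ and $f(x)=\prod^{n'(x)}_{i=1}\frac{p_i}{p_i-1}$ have the same asymptotic behavior, i.e. $\lim_{x\to\infty}\frac{f^{\circ}(x)}{f(x)}=1$. Next, Theorem \ref{thm:pdivp_1-2} gives that $f(x)$ and $g^{\circ}(x)=e^\gamma*\log(y(x))$ have the same asymptotic behavior, i.e. $\lim_{x\to\infty}\frac{f(x)}{g^{\circ}(x)}=1$, with $y(x)$ exactly the function appearing in the present statement.

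Then I would combine these by writing $\frac{f^{\circ}(x)}{g^{\circ}(x)}=\frac{f^{\circ}(x)}{f(x)}\cdot\frac{f(x)}{g^{\circ}(x)}$. This factorization is legitimate because $f(x)$ is a finite product of strictly positive terms and hence never vanishes, and likewise $g^{\circ}(x)=e^\gamma\log(y(x))>0$ for $x\geq 2$ since $y(x)\geq p_{n'(x)}\geq 2$ by Lemma \ref{lem:rep}. Passing to the limit and using the product rule for limits yields $\lim_{x\to\infty}\frac{f^{\circ}(x)}{g^{\circ}(x)}=1\cdot 1=1$, which is precisely the claim. Alternatively, one could phrase the same argument through Lemma \ref{lem:squeeze} after bounding $f^{\circ}(x)$ between $f(x)$ and $g(x)$ as in Corollary \ref{cor:pdivp_1}, but the direct transitivity argument is the cleanest.

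There is essentially no genuine obstacle here: the analytic content sits entirely in Corollary \ref{cor:pdivp_1} (the sandwich $f(x)\leq\frac{x}{tot_{*}(x)}<g(x)$ via Lemma \ref{lem:aa_1}) and in Theorem \ref{thm:pdivp_1-2} (the reduction of $\prod_{i=1}^{n'(x)}\frac{p_i}{p_i-1}$ to $\prod_{p\le y(x)}\frac{p}{p-1}$ followed by Mertens' theorem). The present Corollary is just their formal conjunction, so the only point requiring a line of care is confirming that all three functions are eventually positive, which we have already noted, so that the manipulation of quotients and the multiplicativity of limits are valid.
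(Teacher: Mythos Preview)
Your proposal is correct and matches the paper's own proof, which simply reads ``Follows from Corollary \ref{cor:pdivp_1} and Theorem \ref{thm:pdivp_1-2}.'' You have merely spelled out the transitivity step (and the positivity needed to justify the quotient manipulation) that the paper leaves implicit.
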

\begin{proof}
Follows from Corollary \ref{cor:pdivp_1} and Theorem \ref{thm:pdivp_1-2}.    
\end{proof}

\begin{cor}
\label{cor:pdivp_1-3}
Functions $id(x)=x$ and $h^{\circ}(x)=tot_{*}(x)*e^\gamma * \log(y(x))$ with $\gamma$ the Euler's constant and $y(x)=p_{n'(x)}*\left(1+\left(\frac{p_{n'(x)+1}}{p_{n'(x)}}-1\right)r'(x)\right)$ have the same asymptotic behavior.
\end{cor}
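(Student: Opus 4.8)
The plan is to deduce the statement immediately from Corollary~\ref{cor:pdivp_1-2} by clearing a denominator. First I would record that $tot_{*}(x)>0$ for every $x\geq 2$: indeed $tot_{*}(x)=t_{*}(x)\prod_{i=1}^{n'(x)}(p_i-1)$, where the factor $t_{*}(x)=1+(p_{n'(x)+1}-2)r'(x)$ is $\geq 1$ since $p_{n'(x)+1}\geq 3$ and $r'(x)\geq 0$, and $\prod_{i=1}^{n'(x)}(p_i-1)$ is a product of integers $\geq 1$. Likewise $\log(y(x))>0$, because by Lemma~\ref{lem:rep}(1) applied to the prime succession the quantity $y(x)=p_{n'(x)}\bigl(1+(\tfrac{p_{n'(x)+1}}{p_{n'(x)}}-1)r'(x)\bigr)$ is a convex combination of $p_{n'(x)}$ and $p_{n'(x)+1}$, hence $y(x)\geq p_{n'(x)}\geq 2$ and $\log(y(x))\geq\log 2>0$. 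Consequently $h^{\circ}(x)=tot_{*}(x)\,e^{\gamma}\log(y(x))>0$ for all $x\geq 2$.

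With positivity in hand, I would simply compute, for all sufficiently large $x$,
\[
\frac{id(x)}{h^{\circ}(x)}=\frac{x}{tot_{*}(x)\,e^{\gamma}\log(y(x))}
=\frac{x/tot_{*}(x)}{e^{\gamma}\log(y(x))}=\frac{f^{\circ}(x)}{g^{\circ}(x)},
\]
where $f^{\circ}$ and $g^{\circ}$ are exactly the functions of Corollary~\ref{cor:pdivp_1-2}. That corollary asserts precisely that $f^{\circ}$ and $g^{\circ}$ have the same asymptotic behavior, i.e.\ $\lim_{x\to\infty}f^{\circ}(x)/g^{\circ}(x)=1$; therefore $\lim_{x\to\infty} id(x)/h^{\circ}(x)=1$, which is the desired conclusion.

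I do not anticipate any genuine obstacle: the corollary is just the $tot_{*}$-cleared form of Corollary~\ref{cor:pdivp_1-2}, and the only thing worth writing out explicitly is the positivity bookkeeping above that legitimizes moving the factor $tot_{*}(x)$ across the asymptotic equivalence. Should a self-contained proof be preferred, one may instead chain Corollary~\ref{cor:pdivp_1} (giving $f^{\circ}(x)\sim f(x)$) with Theorem~\ref{thm:pdivp_1-2} (giving $f(x)\sim g^{\circ}(x)$) to obtain $f^{\circ}\sim g^{\circ}$, and then multiply through by $tot_{*}(x)$ as above; but citing Corollary~\ref{cor:pdivp_1-2} is the shortest route.
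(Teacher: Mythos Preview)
Your proposal is correct and follows exactly the paper's approach: the paper's proof consists of the single sentence ``Follows from Corollary~\ref{cor:pdivp_1-2},'' and your argument is precisely that deduction, with the added (and entirely appropriate) positivity bookkeeping that makes the division by $tot_{*}(x)$ legitimate.
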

\begin{proof}
Follows from Corollary \ref{cor:pdivp_1-2}.    
\end{proof}

\section{\label{sec:PNTPrimTot}Putting All Together}
Notice that functions having the same asymptotic behavior as functions $id(x)$ and $\log(x)$ can replace them in the prime number theorem. So, we can express the Prime Number Theorem in several ways, one of them using $n$-totative numbers. Table \ref{tab:pntprimorial} shows the asymptotic behavior of $\pi(x)$ vs $\frac{x}{\log(x)}$, $\frac{x}{\log_{*}(x)}$, $\frac{x}{\log_{\diamond}(x)}$, $\frac{h^{\circ}(x)}{\log(h^{\circ}(x))}$, and $\frac{x}{\log(h^{\circ}(x))}$. These results are obtained with the Python program (PNTprimorials.py) freely available at Professor Jonatan Gomez github repository \cite{GomezPrimesGit}.

\begin{table}[htbp]
\centering
\begin{tabular}{|r|r|r|r|r|r|}
\cline{2-6}
\multicolumn{1}{c}{} & \multicolumn{5}{|c|}{$\pi(x)/$ } \\
\hline
 \multicolumn{1}{|c|}{$x$} & \multicolumn{1}{c|}{$\frac{x}{\log(x)}$} & \multicolumn{1}{c|}{$\frac{x}{\log^{*}(x)}$} & \multicolumn{1}{c|}{$\frac{x}{\log^{\diamond}(x)}$} & \multicolumn{1}{c|}{$\frac{h^{\circ}(x)}{\log(h^{\circ}(x))}$} & \multicolumn{1}{c|}{$\frac{x}{\log(h^{\circ}(x))}$} \\
\hline
$10^{1}$ & 0.921 & 0.392 & 0.702 & 1.101 & 0.787 \\
$10^{2}$ & 1.151 & 0.683 & 0.988 & 1.327 & 1.106 \\
$10^{3}$ & 1.161 & 0.770 & 1.018 & 1.305 & 1.137 \\
$10^{4}$ & 1.132 & 0.820 & 1.025 & 1.236 & 1.119 \\
$10^{5}$ & 1.104 & 0.840 & 1.014 & 1.227 & 1.093 \\
$10^{6}$ & 1.084 & 0.858 & 1.011 & 1.180 & 1.077 \\
$10^{7}$ & 1.071 & 0.875 & 1.013 & 1.186 & 1.064 \\
$10^{8}$ & 1.061 & 0.881 & 1.004 & 1.145 & 1.057 \\
$10^{9}$ & 1.054 & 0.885 & 0.997 & 1.137 & 1.050 \\
$10^{10}$ & 1.048 & 0.893 & 0.998 & 1.103 & 1.045 \\
$10^{11}$ & 1.043 & 0.902 & 0.998 & 1.089 & 1.041 \\
$10^{12}$ & 1.039 & 0.905 & 0.995 & 1.102 & 1.037 \\
$10^{13}$ & 1.036 & 0.910 & 0.996 & 1.081 & 1.034 \\
$10^{14}$ & 1.033 & 0.914 & 0.995 & 1.069 & 1.032 \\
$10^{15}$ & 1.031 & 0.919 & 0.996 & 1.072 & 1.030 \\
$10^{16}$ & 1.029 & 0.924 & 0.997 & 1.061 & 1.028 \\
$10^{17}$ & 1.027 & 0.926 & 0.996 & 1.077 & 1.026 \\
$10^{18}$ & 1.025 & 0.929 & 0.996 & 1.076 & 1.024 \\
$10^{19}$ & 1.024 & 0.931 & 0.995 & 1.065 & 1.023 \\
$10^{20}$ & 1.023 & 0.933 & 0.995 & 1.061 & 1.022 \\
$10^{21}$ & 1.022 & 0.935 & 0.995 & 1.046 & 1.021 \\
$10^{22}$ & 1.021 & 0.938 & 0.995 & 1.049 & 1.020 \\
$10^{23}$ & 1.020 & 0.940 & 0.996 & 1.042 & 1.019 \\
$10^{24}$ & 1.019 & 0.941 & 0.995 & 1.054 & 1.018 \\
$10^{25}$ & 1.018 & 0.943 & 0.996 & 1.048 & 1.017 \\
\hline
\end{tabular}
\label{tab:pntprimorial}
\caption{Asymptotic behavior of $\pi(x)$, $\frac{x}{\log(x)}$, $\frac{x}{\log_{*}(x)}$, $\frac{x}{\log_{\diamond}(x)}$, $\frac{h^{\circ}(x)}{\log(h^{\circ}(x))}$, and $\frac{x}{\log(h^{\circ}(x))}$. Results are obtained with the Python program (PNTprimorials.py) freely available at Professor Jonatan Gomez github repository \cite{GomezPrimesGit}.}
\end{table}

\section{Conclusions and Future Work}
We have developed several functions having the same asymptotic behavior of $\pi(x)$ the prime number counting function. We do this by representing any positive positive number in terms of primorial numbers and multiplicative fractions of prime numbers. We were also able to define a function with the same asymptotic behavior of $\pi(x)$ but defined in terms of primorial $n$-totative numbers. We study this relationship since any prime number lower or equal than $\#(n)+1$ is a prime number $p_i$ with $i=1,2,\ldots,n$ or a primorial $n$-totative number, i.e., $\pi(x)$ can be expressed in terms of $tot(n)$.

Our future work will concentrate on defining the class of functions with the same asymptotic behavior of $\pi(x)$, defining a function with a smaller convergence ratio to $\pi(x)$ for small values of $x$. We will in-depth study such functions but defined in terms of primorial $n$-totative numbers and define some function approximations for counting twin, cousin, sexy primes, and a constellation of prime numbers in the same way we did for $\pi(x)$ in terms of $n$-totative numbers.

\printbibliography
\end{document}